\documentclass[12pt]{amsart} 
\usepackage{fourier} 
\usepackage{fullpage} 
\usepackage{dsfont} 
\usepackage{amssymb} 
\usepackage[utf8]{inputenc} 
\usepackage[english]{babel} 

\theoremstyle{plain} 
\newtheorem{thm}{Theorem} 
\newtheorem*{lem}{Lemma} 
\newtheorem*{cor}{Corollary}

\theoremstyle{definition}

\theoremstyle{remark}

\begin{document} 
\title{Convergence Abscissas for Dirichlet Series with Multiplicative Coefficients}
\date{\today} 

\author{Ole Fredrik Brevig} 
\address{Department of Mathematical Sciences, Norwegian University of Science and Technology (NTNU), NO-7491 Trondheim, Norway} 
\email{ole.brevig@math.ntnu.no}

\author{Winston Heap} 
\address{Department of Mathematical Sciences, Norwegian University of Science and Technology (NTNU), NO-7491 Trondheim, Norway} 
\email{winstonheap@gmail.com}

\thanks{The authors are supported by Grant 227768 of the Research Council of Norway.}
\subjclass[2010]{Primary 30B50. Secondary 40A30.}
\keywords{Dirichlet series, convergence abscissas, multiplicative coefficients}
\begin{abstract}
	This note deals with the relationship between the abscissas of simple, uniform and absolute convergence for the Dirichlet series $f(s) = \sum_{n=1}^\infty a_n n^{-s}$, when the coefficients $a_n$ are either multiplicative or completely multiplicative.
\end{abstract}

\maketitle

Consider the ordinary Dirichlet series
\[f(s) = \sum_{n=1}^\infty a_n n^{-s}, \qquad s = \sigma + it.\]
A basic fact is that Dirichlet series converge in half-planes, just as power series converge in discs. However, Dirichlet series can have different types of convergence in distinct half-planes. It was H. Bohr \cite{bohr-darstellung,B13a} who first studied the relationship between the following three convergence abscissas: 
\begin{align*}
	\sigma_c(f) &= \inf \left\{\sigma \, : \, \sum_{n=1}^\infty a_n n^{-\sigma} \text{ converges}\right\} & \text{(Simple),} \\
	\sigma_b(f) &= \inf \left\{\sigma \, : \, \sum_{n=1}^\infty a_n n^{-\sigma-it} \text{ converges uniformly for } t\in\mathbb{R}\right\} &\text{(Uniform),} \\
	\sigma_a(f) &= \inf \left\{\sigma \, : \, \sum_{n=1}^\infty |a_n| n^{-\sigma} \text{ converges}\right\} &\text{(Absolute).} 
	\intertext{Clearly $\sigma_c \leq \sigma_b \leq \sigma_a$, and it is easy to deduce that $\sigma_a(f)-\sigma_c(f)\leq 1$. Under the assumption that the Dirichlet series $f$ does not converge at $s=0$, the Cauchy--Hadamard type formulas for these abscissas are:}
	\sigma_c(f) &= \limsup_{x \to \infty} \frac{1}{\log{x}}\log{\left|\sum_{n\leq x} a_n\right|}, \\
	\sigma_b(f) &= \limsup_{x \to \infty} \frac{1}{\log{x}}\log\left(\sup_{t \in \mathbb{R}}\left|\sum_{n\leq x} a_n n^{-it}\right|\right), \\
	\sigma_a(f) &= \limsup_{x \to \infty} \frac{1}{\log{x}}\log\left(\sum_{n \leq x} |a_n|\right).
\end{align*}
By choosing $a_n = \pm 1$ in a suitable manner, it is now easy to construct a Dirichlet series with $\sigma_a - \sigma_c = \alpha$, for any $\alpha\in[0,1]$. Moreover, the Cauchy--Schwarz inequality can be applied to show that $\sigma_a - \sigma_b \leq 1/2$. The fact that there are Dirichlet series with $\sigma_a - \sigma_b = \beta$ for any $\beta \in [0,1/2]$ is a result due to Bohnenblust--Hille \cite{BH31}. See \cite{BOAS} for an excellent exposition of these results, containing clear proofs using modern techniques.

The inequality used in \cite{BH31} to obtain this result was recently substantially improved \cite{BPSS,BH}, and the improved version can be used to get a precise qualitative version of the optimality of $\beta=1/2$ in view of the Cauchy--Hadamard formulas given above (see \cite{brevig}).

It is interesting to consider the difference between these abscissas when the coefficients have some added multiplicative structure (recall that $a_n$ is \emph{multiplicative} if $a_{mn} = a_m a_n$ whenever $\gcd(m,n)=1$ and is \emph{completely multiplicative} if this relationship persists for any choice of $m$ and $n$). For example, the Riemann hypothesis is equivalent to $\sigma_a-\sigma_c = 1/2$ for the series
\[1/\zeta(s) = \sum_{n=1}^\infty \mu(n)n^{-s} = \prod_{p} \left(1-p^{-s}\right),\]
where $\mu(n)$ is the Möbius function, which of course is multiplicative. 

L{\'e}vy \cite{levy} argued that any random model of the Möbius function should take into account the multiplicative nature of $\mu(n)$, and, following this, Wintner \cite{wintner} showed that the Dirichlet series represented by the Euler product 
\[\prod_{p}\left(1+\varepsilon_p p^{-s}\right)\]
has $\sigma_c = 1/2$ almost always, and concluded that ``the Riemann hypothesis is almost always true''. Here $\varepsilon_p$ denotes the Rademacher random variables which assumes the values $\pm1$ with equal probability.

Motivated by this result regarding ``typical'' behavior, we will investigate the possible values for $\sigma_a(f) - \sigma_c(f)$ and $\sigma_a(f) - \sigma_b(f)$, when the coefficients of the Dirichlet series $f$ are either multiplicative or completely multiplicative. For the first quantity, we have the following.

\begin{thm} \label{thm:simabs}
	There exists a Dirichlet series $f$ with completely multiplicative coefficients such that $\sigma_a(f)-\sigma_c(f)=\alpha$ for any $\alpha\in[0,1]$.
	\begin{proof}
		The cases $\alpha=0$ and $\alpha=1$ follow from considering the Riemann zeta function and the Dirichlet $L$-function of a non-principal character, respectively. 
		
		For $0 < \alpha < 1$, consider
		\[g_\alpha(s)=\left(1-3^{1-\alpha-s}\right)^{-1} = \sum_{k=0}^\infty 3^{(1-\alpha)k}\,3^{-ks}.\] 
		We now let $\chi$ denote the non-principal character of modulus $3$ and we consider the Dirichlet series given by the product 
		\[f(s) = g_\alpha(s)L(s,\chi).\]
		
		Clearly, $f(s)$ has completely multiplicative coefficients, since $\chi(3)=0$ and since $g_\alpha(s)$ is a geometric series. The latter fact also implies that $\sigma_c(g_\alpha)=\sigma_a(g_\alpha)=1-\alpha$, and for the $L$-function of a non-principal character we have $\sigma_c=0$ and $\sigma_a=1$. Now, the product of a conditionally convergent series and an absolutely convergent series is conditionally convergent, so we have $\sigma_c(f)\leq 1-\alpha$. This cannot be improved, since $f(1-\alpha)$ does not convergence (an infinite number of the terms have modulus $1$), so $\sigma_c(f)=1-\alpha$. 
		
		The product of two absolutely convergent series is absolutely convergent, so $\sigma_a(f)\leq1$. We let $|f|(s)$ denote the Dirichlet series where we have replaced the coefficients by their absolute values. We see that $|f|(1)$ diverges since $L(1,|\chi|)$ diverges, the coefficients of $g_\alpha$ are positive, and $g_\alpha(1)\neq0$. In conclusion, we have $\sigma_a(f)-\sigma_c(f) = 1 - (1-\alpha) = \alpha$.
	\end{proof}
\end{thm}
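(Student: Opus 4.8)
The plan is to reduce to the two extreme values of $\alpha$ and then interpolate by combining two completely multiplicative building blocks whose roles are cleanly separated. For $\alpha = 0$ the Riemann zeta function, with $a_n \equiv 1$, has $\sigma_c = \sigma_a = 1$; for $\alpha = 1$ a Dirichlet $L$-function attached to a non-principal character $\chi$ has $\sigma_a = 1$ but, by the cancellation in character sums together with the Cauchy--Hadamard formula for $\sigma_c$, has $\sigma_c = 0$. Both $a_n = 1$ and $a_n = \chi(n)$ are completely multiplicative, so these endpoints are immediate. The whole difficulty lies in the open interval $0 < \alpha < 1$, where I must manufacture a prescribed gap $\alpha$ while respecting the rigid constraint that a completely multiplicative sequence is determined entirely by its values on the primes.

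For the interior I would exploit that rigidity rather than fight it: assign a single prime an inflated value that fixes the convergence abscissa, and let the remaining primes carry a non-principal character to inflate the absolute abscissa without interfering. Concretely, take $p = 3$, set $a_3 = 3^{1-\alpha}$ and $a_p = \chi(p)$ for $p \neq 3$, where $\chi$ is the non-principal character modulo $3$ (so $\chi(3) = 0$). On the level of Dirichlet series this is the product of the geometric factor $g_\alpha(s) = (1 - 3^{1-\alpha-s})^{-1}$, supported on powers of $3$, with $L(s,\chi)$, supported on integers coprime to $3$; because the two supports meet only in the trivial way dictated by $\chi(3) = 0$, the convolution of coefficients is again completely multiplicative, with $a_{3^k m} = 3^{(1-\alpha)k}\chi(m)$ for $\gcd(m,3) = 1$.

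It then remains to read off the two abscissas. For absolute convergence I would use that a product of absolutely convergent Dirichlet series is absolutely convergent, giving $\sigma_a(f) \leq \max(\sigma_a(g_\alpha), \sigma_a(L)) = \max(1-\alpha, 1) = 1$; equality follows by examining the series of absolute values at $s = 1$, whose Euler product reduces to a finite factor at $p = 3$ times $\prod_{p \neq 3}(1 - p^{-1})^{-1}$, and the latter diverges. For simple convergence I would note that $g_\alpha$ converges absolutely and $L(s,\chi)$ converges for every $\sigma > 1 - \alpha$, so the product converges there, giving $\sigma_c(f) \leq 1 - \alpha$; the reverse inequality follows by testing the real point $s = 1 - \alpha$, where the terms indexed by powers of $3$ satisfy $a_{3^k} 3^{-(1-\alpha)k} = 1$, so infinitely many terms have modulus $1$ and the series cannot converge. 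Subtracting gives $\sigma_a(f) - \sigma_c(f) = \alpha$.

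The step I expect to be the genuine obstacle is not the bookkeeping but the conceptual move of the second paragraph: recognizing that complete multiplicativity, which trivially rules out the free sign choices $a_n = \pm 1$ available in the unrestricted problem, can instead be turned to advantage by decoupling the two abscissas onto disjoint sets of primes. Once one sees that a single Euler factor with a positive coefficient admits no cancellation, and therefore forces $\sigma_c$ and $\sigma_a$ to coincide at $1 - \alpha$ for that factor, while a non-principal character supplies exactly the cancellation needed to separate $\sigma_c = 0$ from $\sigma_a = 1$ on the complementary primes, the construction essentially writes itself. A minor point to verify carefully is the product theorem for simple convergence — that conditional convergence of one factor together with absolute convergence of the other guarantees convergence of the product in the common half-plane — since this is precisely what legitimizes the bound $\sigma_c(f) \leq 1 - \alpha$.
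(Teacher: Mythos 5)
Your proposal is correct and follows essentially the same route as the paper: the same endpoints ($\zeta$ for $\alpha=0$, a non-principal $L$-function for $\alpha=1$), the same product $f = g_\alpha \cdot L(\cdot,\chi)$ with $\chi$ the non-principal character mod $3$, and the same arguments for both abscissas, including the observation that the terms indexed by powers of $3$ have modulus $1$ at $s=1-\alpha$. The only difference is expository: you spell out the coefficient formula $a_{3^k m} = 3^{(1-\alpha)k}\chi(m)$ and the Mertens-type product theorem slightly more explicitly than the paper does.
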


Of course, $g_\alpha(s)$ can be replaced by any power series in $3^{-s}$ with non-negative coefficients and $\sigma_a = 1-\alpha$ to obtain an example which is multiplicative, but not completely multiplicative.

Our next result can be considered as an example of the following scheme: A \emph{contractive} function theoretic result concerning power series, can possibly be applied \emph{multiplicatively} to obtain a similar result for ordinary Dirichlet series. A recent example of this type of result is \cite[Thm.~2]{BHS}. See also the proof of the main theorem in \cite{helson}.

\begin{thm} \label{thm:uniabs}
	Suppose that the Dirichlet series $f$ has multiplicative coefficients. Then $\sigma_a = \sigma_b$.
\end{thm}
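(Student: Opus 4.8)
Since the chain $\sigma_c\le\sigma_b\le\sigma_a$ always holds, the content of the statement is the reverse inequality $\sigma_a\le\sigma_b$. The plan is to exploit the multiplicative structure prime by prime through the Euler factorisation $f(s)=\prod_p f_p(s)$, where
\[f_p(s)=\sum_{k\ge0}a_{p^k}p^{-ks}=G_p(p^{-s}),\qquad G_p(w)=\sum_{k\ge0}a_{p^k}w^k,\]
and $G_p(0)=a_1=1$. By Bohr's theorem the abscissa $\sigma_b$ coincides with the abscissa of boundedness, so for every $\sigma>\sigma_b$ the series converges uniformly and $f$ is bounded on $\operatorname{Re}(s)\ge\sigma$, say by $M$. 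I would fix $\sigma'>\sigma_b$, choose $\sigma\in(\sigma_b,\sigma')$, and aim to show $\sum_n|a_n|n^{-\sigma'}<\infty$; letting $\sigma'\downarrow\sigma_b$ then gives $\sigma_a\le\sigma_b$.

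Next I would record the local sup-norms $M_p(\sigma)=\sup_{t\in\mathbb R}|f_p(\sigma+it)|=\sup_{|w|=p^{-\sigma}}|G_p(w)|$, which satisfy $M_p(\sigma)\ge|G_p(0)|=1$. The heart of the argument is the factorisation $\prod_p M_p(\sigma)\le M$. To obtain it I would pass to the Bohr lift of $f(\cdot+\sigma)$: assigning an independent variable $z_p$ to $p^{-s}$ turns $f$ into a product $\prod_p G_p(p^{-\sigma}z_p)$ of functions of separate variables, and Kronecker's theorem identifies $\sup_{\operatorname{Re}(s)=\sigma}|f|$ with the sup-norm of this product over the polytorus. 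Since the sup-norm of a product of functions in disjoint variables equals the product of the individual sup-norms, and since restricting (equivalently, averaging over) the remaining coordinates cannot increase the sup-norm by the maximum modulus principle, one gets $\prod_{p\le P}M_p(\sigma)\le M$ for every finite $P$, hence $\prod_p M_p(\sigma)\le M$. In particular $\sum_p(M_p(\sigma)-1)<\infty$, and each $G_p$ is bounded on $|w|\le p^{-\sigma}$ for all $\sigma>\sigma_b$, so its radius of convergence is at least $p^{-\sigma_b}$.

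The contractive input, applied multiplicatively exactly as in the scheme described above, is the classical inequality of Bohr: if $h(w)=\sum_k b_kw^k$ is holomorphic and bounded by $1$ on the unit disc, then $\sum_k|b_k|r^k\le1$ for $0\le r\le1/3$. Applying this to $h(w)=G_p(p^{-\sigma}w)/M_p(\sigma)$ with $r=p^{-(\sigma'-\sigma)}$ yields, for every prime with $p^{-(\sigma'-\sigma)}\le1/3$, the bound
\[\sum_{k\ge0}|a_{p^k}|p^{-k\sigma'}\le M_p(\sigma).\]
Only finitely many primes are excluded, and for those the radius-of-convergence bound above guarantees that the local sum is finite. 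Writing $S_p$ for this local sum, we have $S_p\ge1$ and $S_p-1\le M_p(\sigma)-1$ for all large $p$, so $\sum_p(S_p-1)<\infty$ by the previous paragraph. Consequently $\sum_n|a_n|n^{-\sigma'}=\prod_p S_p$ converges, giving $\sigma_a\le\sigma'$ and, in the limit, $\sigma_a\le\sigma_b$.

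I expect the main obstacle to be the factorisation $\prod_p M_p(\sigma)\le M$: in the strip $\sigma_b<\sigma<\sigma_a$ the pointwise Euler product need not converge, so controlling the infinite tail of primes requires the maximum modulus principle on the (infinite) polydisc rather than a naive product estimate. Everything else is a clean prime-by-prime application of Bohr's inequality together with the elementary consequence $\sum_p(M_p(\sigma)-1)<\infty$.
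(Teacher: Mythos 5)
Your proof is correct and follows essentially the same route as the paper: factor the sup-norm over the Euler factors via the Bohr lift and Kronecker's theorem, then apply Bohr's $1/3$-inequality multiplicatively to all but finitely many primes. The only (immaterial) difference is that for the finitely many small primes the paper uses an explicit Cauchy--Schwarz/Parseval bound $1/\sqrt{1-r^2}$, whereas you simply observe that these finitely many local sums are finite by analyticity.
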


It was H. Bohr who realized the connection between Dirichlet series and function theory in polydiscs \cite{B13b}, through the correspondence $p_j^{-s} \leftrightarrow z_j.$  Inspecting the prime factorization $n = \prod_{j} p_j^{\alpha_j}$, we associate to the integer $n$ the multi-index $\alpha(n) = (\alpha_1,\,\alpha_2,\,\ldots\,)$. The \emph{Bohr lift} of the Dirichlet series $f(s) = \sum_{n\geq1} a_n n^{-s}$ is the power series
\[\mathcal{B}f(z) = \sum_{n=1}^\infty a_n z^{\alpha(n)}.\] 
Using Kronecker's theorem \cite[Ch.~13]{hardywright} (see also \cite[Sec.~2.2]{HLS}), we may conclude that
\[\|f\|_\infty := \sup_{\sigma>0}|f(s)| = \sup_{z \in \mathbb{D}^\infty\cap c_0} |\mathcal{B}f(z)|.\]
Now, let us suppose that $f$ has multiplicative coefficients. We may then factor
\[f(s) = \prod_{j} \bigg(1 + \sum_{k=1}^\infty a_{p_j^k} p^{-ks}\bigg) = \prod_{j} f_j(s),\]
at least for $\sigma>\sigma_a$. In particular, since each prime only appears in one factor, we also obtain
\[\|f\|_\infty = \sup_{z \in \mathbb{D}^\infty\cap c_0} |\mathcal{B}f(z)| = \prod_{j} \sup_{z_j \in \mathbb{D}} |\mathcal{B}f_j(z_j)| = \prod_{j} \|f_j\|_\infty.\]
To complete the proof of Theorem~\ref{thm:uniabs}, we will require the following.
\begin{lem} \label{lem:bohrineq}
	Let $F(z) = \sum_{m\geq0} b_m z^m$ and suppose that $\sup_{z \in \mathbb{D}} |F(z)| < \infty$. Let $0\leq r < 1$. Then 
	\[\sum_{m=0}^\infty |b_m| r^m \leq C(r) \sup_{z \in \mathbb{D}} |F(z)|,\]
	where
	\[C(r) = \begin{cases}
		1, & 0\leq r \leq 1/3, \\
		1/\sqrt{1-r^2}, & 1/3 < r <1.
	\end{cases}\]
	\begin{proof}
		The first estimate is Bohr's inequality \cite{bohr1914}, the second follows from the Cauchy--Schwarz inequality, Parseval's formula and the maximum modulus principle.
	\end{proof}
\end{lem}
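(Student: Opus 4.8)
The plan is to treat the two ranges of $r$ separately, so as to produce the two branches of $C(r)$. Since every quantity in the statement is homogeneous in $F$, I would first normalise: set $M = \sup_{z\in\mathbb{D}}|F(z)|$, assume $M \le 1$, and prove $\sum_{m\ge0}|b_m|r^m \le C(r)$; the general case then follows by scaling.

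For $0 \le r \le 1/3$ the assertion is precisely Bohr's classical power series inequality, which I would cite from \cite{bohr1914}. Its mechanism is the sharp coefficient bound $|b_m| \le 1-|b_0|^2$, valid for every $m\ge1$ whenever $\|F\|_\infty \le 1$: the function $\psi := (F-b_0)/(1-\overline{b_0}F)$ maps $\mathbb{D}$ into itself and vanishes at the origin, so Schwarz's lemma gives $\psi(z)=zB(z)$ with $\|B\|_\infty\le1$, and unwinding the Möbius relation $F=(b_0+\psi)/(1+\overline{b_0}\psi)$ controls each $b_m$. Granting this, summing the geometric series yields $\sum_{m\ge0}|b_m|r^m \le |b_0| + (1-|b_0|^2)\,r/(1-r)$, and an elementary manipulation shows the right-hand side is at most $1$ precisely when $r \le 1/(2+|b_0|)$. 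As $|b_0|$ ranges over $[0,1]$, the worst case forces the uniform threshold $r\le1/3$, which is the value in the statement.

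For $1/3 < r < 1$ I would argue directly, and this is where the Cauchy--Schwarz inequality, Parseval's formula and the maximum modulus principle enter. Pairing $|b_m|r^m = |b_m|\cdot r^m$ and applying Cauchy--Schwarz gives
\[
\sum_{m=0}^\infty |b_m| r^m \le \Bigl(\sum_{m=0}^\infty |b_m|^2\Bigr)^{1/2}\Bigl(\sum_{m=0}^\infty r^{2m}\Bigr)^{1/2} = \frac{1}{\sqrt{1-r^2}}\Bigl(\sum_{m=0}^\infty |b_m|^2\Bigr)^{1/2}.
\]
By Parseval's formula $\sum_{m\ge0}|b_m|^2 = \lim_{\rho\to1}\tfrac{1}{2\pi}\int_0^{2\pi}|F(\rho e^{i\theta})|^2\,d\theta$, and the maximum modulus principle bounds each circular mean by $\sup_{z\in\mathbb{D}}|F(z)|^2 \le 1$. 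Hence $\sum_{m\ge0}|b_m|r^m \le 1/\sqrt{1-r^2}$, as required.

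The one genuinely nontrivial ingredient is the sharp constant in Bohr's inequality. Indeed, the Cauchy--Schwarz computation above is valid for \emph{all} $0\le r<1$ and always delivers the constant $1/\sqrt{1-r^2}$; for $r\le1/3$ this exceeds $1$ and is therefore beaten by Bohr's bound. Replacing the Schwarz--Pick input by the crude Parseval estimate $\sum_{m\ge1}|b_m|^2\le1-|b_0|^2$ only recovers the constant $1/\sqrt{1-r^2}$ again, so the value $1$ on $[0,1/3]$ genuinely relies on the Möbius argument. Reassuringly, the two branches are consistent at $r=1/3$, where $1 \le 1/\sqrt{1-r^2}$, so that $C(r)$ is in each range the smaller of the two available bounds.
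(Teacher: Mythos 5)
Your proof is correct and follows exactly the paper's route: the branch $0\le r\le 1/3$ is quoted as Bohr's inequality from \cite{bohr1914}, and the branch $1/3<r<1$ is handled by the Cauchy--Schwarz inequality, Parseval's formula and the maximum modulus principle, which is precisely the paper's argument. One minor caveat, immaterial since you (like the paper) ultimately cite Bohr's inequality rather than rely on your sketch of it: the claim that ``unwinding the M\"obius relation'' gives $|b_m|\le 1-|b_0|^2$ for every $m\ge 1$ is incomplete as stated---a direct expansion of $F=(b_0+\psi)/(1+\overline{b_0}\psi)$ with $\psi=zB$ only yields the weaker bound $|b_m|\le 1+|b_0|$ for $m\ge 2$, and the standard (Wiener) proof instead reduces to the Schwarz--Pick case $m=1$ via the root-of-unity averaging $\frac{1}{m}\sum_{j=0}^{m-1}F(\omega^j z)$, $\omega=e^{2\pi i/m}$.
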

The contractive function theoretic result for power series mentioned earlier is that $C(r)=1$ when $0 \leq r \leq 1/3$. It should also be pointed out that the values $C(r)$ prescribed above are not optimal when $r>1/3$, and that precise estimates in this range can be found in \cite{BB}. 

\begin{proof}[Proof of Theorem~\ref{thm:uniabs}]
	Let the coefficients of $f(s) = \sum_{n\geq1} a_n n^{-s}$ be multiplicative, and fix $\varepsilon>0$. Since uniform convergence implies boundedness, we may (after a horizontal translation) assume that $\sigma_b(f)=-\varepsilon$ so that $\|f\|_\infty < \infty$. We then want to prove that under this assumption we have
	\[\sum_{n=1}^\infty |a_n| n^{-\varepsilon}<\infty,\]
	so that $\sigma_a(f)\leq\varepsilon$, and hence $\sigma_a(f) - \sigma_b(f) \leq 2\varepsilon$. Since $\varepsilon>0$ is arbitrary, $\sigma_a(f) = \sigma_b(f)$. By the discussion preceding it and the lemma, we obtain
	\begin{align*}
		\sum_{n=1}^\infty |a_n| n^{-\varepsilon} &= \prod_{p} \left(1 + \sum_{k=1}^\infty \big|a_{p^k}\big| p^{-k\varepsilon}\right) \leq \left(\prod_{p^\varepsilon < 3} \frac{\|f_p\|_{\infty}}{\sqrt{1-p^{-2\epsilon}}}\right)\left(\prod_{3 \leq p^\varepsilon<\infty} 1\cdot\|f_p\|_\infty\right) \\
		&= \left(\prod_{p^\varepsilon < 3} \frac{1}{\sqrt{1-p^{-2\epsilon}}}\right)\left(\prod_{p} \|f_p\|_\infty\right) = \left(\prod_{p^\varepsilon < 3} \frac{1}{\sqrt{1-p^{-2\epsilon}}}\right)\|f\|_\infty < \infty. \qedhere
	\end{align*}
\end{proof}

Theorem~\ref{thm:uniabs} allows us to provide a strengthening of a result of Bohr in the case of Dirichlet series with multiplicative coefficients. 
\begin{cor}
	Let $f(s) = \sum_{n\geq1} a_n n^{-s}$ have multiplicative coefficients and suppose that $f$ is somewhere convergent. If $f$ has a bounded analytic continuation to $\sigma\geq\sigma_0+\varepsilon$, for every $\varepsilon>0$, then $\sigma_a(f)=\sigma_0$. 
	\begin{proof}
		Bohr's theorem states that $\sigma_b(f)=\sigma_0$ without any assumptions on the coefficients of $f$. By Theorem~\ref{thm:uniabs}, we have $\sigma_a(f)=\sigma_b(f)=\sigma_0$.
	\end{proof}
\end{cor}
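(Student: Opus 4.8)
The plan is to reduce the statement to two facts: Bohr's classical theorem identifying the abscissa of uniform convergence with the abscissa of boundedness, together with Theorem~\ref{thm:uniabs}, which equates $\sigma_a$ and $\sigma_b$ whenever the coefficients are multiplicative. The multiplicativity hypothesis will enter only through the latter; the boundedness hypothesis is tailored precisely to activate the former. The assumption that $f$ is somewhere convergent plays the supporting role of guaranteeing that all three abscissas are finite, so that the Cauchy--Hadamard formulas and Bohr's theorem genuinely apply.

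First I would recall Bohr's theorem in the following form: for any somewhere-convergent Dirichlet series $g$, the abscissa $\sigma_b(g)$ coincides with the abscissa of boundedness $\sigma_\infty(g)$, defined as the infimum of those $\sigma_1 \in \mathbb{R}$ for which $g$ admits a bounded analytic continuation to the half-plane $\{s : \sigma > \sigma_1\}$. This is exactly where the hypothesis ``bounded analytic continuation to $\sigma \geq \sigma_0 + \varepsilon$ for every $\varepsilon > 0$'' does its work: read as pinning $\sigma_0$ to be the boundedness abscissa, it says $\sigma_\infty(f) = \sigma_0$, and so Bohr's theorem delivers $\sigma_b(f) = \sigma_0$ with no appeal whatsoever to the arithmetic structure of the coefficients $a_n$.

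Second I would invoke Theorem~\ref{thm:uniabs}. Since $f$ has multiplicative coefficients, that theorem gives $\sigma_a(f) = \sigma_b(f)$, and combining this with $\sigma_b(f) = \sigma_0$ yields $\sigma_a(f) = \sigma_0$, as claimed. The genuine content of the corollary, relative to Bohr's theorem on its own, is precisely that multiplicativity forces absolute convergence to set in on the \emph{same} abscissa as uniform convergence, so that boundedness of the continuation controls not merely $\sigma_b$ but $\sigma_a$ as well.

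I expect no genuine analytic obstacle in this final step, since the proof is a two-line concatenation of the two ingredients; the real difficulty has already been front-loaded into Theorem~\ref{thm:uniabs} (through Lemma~\ref{lem:bohrineq} and the Bohr-lift factorization $\|f\|_\infty = \prod_j \|f_j\|_\infty$). The one point demanding care is the bookkeeping of the boundedness abscissa: one must ensure the hypothesis is interpreted as identifying $\sigma_0$ with $\sigma_\infty(f)$ rather than as a mere upper bound $\sigma_\infty(f) \leq \sigma_0$, so that Bohr's theorem produces the exact equality $\sigma_b(f) = \sigma_0$ and not just the inequality $\sigma_b(f) \leq \sigma_0$.
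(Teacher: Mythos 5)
Your proposal is correct and follows essentially the same route as the paper: Bohr's theorem gives $\sigma_b(f)=\sigma_0$ without any coefficient hypothesis, and Theorem~\ref{thm:uniabs} then upgrades this to $\sigma_a(f)=\sigma_0$ using multiplicativity. Your closing remark about reading the hypothesis as fixing $\sigma_0$ to be the exact boundedness abscissa (rather than an upper bound) is a fair point of care, but it is implicit in the paper's argument as well and does not constitute a different approach.
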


\section*{Note added in proof}
In a recent paper \cite{KP}, J.~Kaczorowski and A.~Perelli have independently proven Theorem~\ref{thm:uniabs} under the additional assumption that the Dirichlet series belongs to the Selberg class. Their methods are slightly different and do not involve analysis on the polydisc.

\bibliographystyle{amsplain} 
\bibliography{mult}

\providecommand{\bysame}{\leavevmode\hbox to3em{\hrulefill}\thinspace}
\providecommand{\MR}{\relax\ifhmode\unskip\space\fi MR }
\providecommand{\MRhref}[2]{%
  \href{http://www.ams.org/mathscinet-getitem?mr=#1}{#2}
}
\providecommand{\href}[2]{#2}
\begin{thebibliography}{10}

\bibitem{BPSS}
F.~Bayart, D.~Pellegrino, and J.~B. Seoane-Sep{\'u}lveda, \emph{The {B}ohr
  radius of the {$n$}-dimensional polydisk is equivalent to {$\sqrt{(\log
  n)/n}$}}, Adv. Math. \textbf{264} (2014), 726--746.

\bibitem{BOAS}
H.~P. Boas, \emph{The football player and the infinite series}, Notices Amer.
  Math. Soc. \textbf{44} (1997), no.~11, 1430--1435.

\bibitem{BH31}
H.~F. Bohnenblust and E.~Hille, \emph{On the absolute convergence of
  {D}irichlet series}, Ann. of Math. \textbf{32} (1931), no.~3, 600--622.

\bibitem{bohr-darstellung}
H.~Bohr, \emph{Darstellung der gleichmäßigen {K}onvergenzabszisse einer
  {Dirichletschen} reihe {$\sum_{n=1}^\infty \frac{a_n}{n^s}$} als {Funktion}
  der {Koeffizienten} der {Reihe}}, Archiv der Mathematik und Physik
  \textbf{21} (1913), no.~3, 326--330.

\bibitem{B13b}
\bysame, \emph{Über die {B}edeutung der {P}otenzreihen unendlich vieler
  {V}ariabeln in der {T}heorie der {D}irichletschen {R}eihen $\sum a_n/n^s$},
  Nachr. Akad. Wiss. Göttingen Math.-Phys. Kl. (1913), 441--488.

\bibitem{B13a}
\bysame, \emph{Über die gleichmässige {K}onvergenz {D}irichletscher
  {R}eihen}, J. Reine Angew. Math. \textbf{143} (1913), 203--211.

\bibitem{bohr1914}
\bysame, \emph{A theorem concerning power series}, Proceedings of the London
  Mathematical Society \textbf{2} (1914), no.~1, 1--5.

\bibitem{BB}
E.~Bombieri and J.~Bourgain, \emph{A remark on {Bohr's} inequality}, Int. Math.
  Res. Not. \textbf{2004} (2004), no.~80, 4307--4330.

\bibitem{BHS}
A.~Bondarenko, W.~Heap, and K.~Seip, \emph{An inequality of {Hardy--Littlewood}
  type for {D}irichlet polynomials}, J. Number Theory \textbf{150} (2015),
  no.~0, 191 -- 205.

\bibitem{brevig}
O.~F. Brevig, \emph{On the {S}idon constant for {D}irichlet polynomials}, Bull.
  Sci. Math. \textbf{138} (2014), no.~5, 656--664.

\bibitem{BH}
A.~Defant, L.~Frerick, J.~Ortega-Cerd{\`a}, M.~Ouna{\"{\i}}es, and K.~Seip,
  \emph{The {B}ohnenblust-{H}ille inequality for homogeneous polynomials is
  hypercontractive}, Ann. of Math. \textbf{174} (2011), no.~1, 485--497.

\bibitem{hardywright}
G.~H. Hardy and E.~M. Wright, \emph{An introduction to the theory of numbers},
  Oxford University Press, 1979.

\bibitem{HLS}
H.~Hedenmalm, P.~Lindqvist, and K.~Seip, \emph{A {H}ilbert space of {D}irichlet
  series and systems of dilated functions in {$L^2(0,1)$}}, Duke Math. J.
  \textbf{86} (1997), no.~1, 1--37.

\bibitem{helson}
H.~Helson, \emph{Hankel forms and sums of random variables}, Studia Math.
  \textbf{176} (2006), no.~1, 85--92.

\bibitem{KP}
J.~Kaczorowski and A.~Perelli, \emph{Some remarks on the convergence of the
  {D}irichlet series of {L}-functions}, arXiv:1506.07630 (2015).

\bibitem{levy}
M.~L{\'e}vy, \emph{Sur les s{\'e}ries dont les termes sont des variables
  {\'e}ventuelles ind{\'e}pendantes}, Studia Math. \textbf{3} (1931), no.~1,
  119--155.

\bibitem{wintner}
A.~Wintner, \emph{Random factorizations and {Riemann’s} hypothesis}, Duke
  Math. J. \textbf{11} (1944), no.~2, 267--275.

\end{thebibliography}
\end{document}